\documentclass{article}
\usepackage[utf8]{inputenc}
\usepackage[margin=1in]{geometry}
\pdfoutput=1
\usepackage{multirow}
\usepackage{amsmath}
\usepackage{float}
\usepackage{amsmath}
\usepackage{amsthm}

\newtheorem{theorem}{Theorem}
\newtheorem*{remark}{Remark}
\usepackage{amssymb}

\newtheorem{definition}{Definition}[section]


\bibliographystyle{elsarticle-num}
\usepackage{graphicx}

\date{}

\begin{document}

\title{Global Optimisation in Hilbert Spaces using the Survival of the Fittest Algorithm.}

\author{Andrew Yu. Morozov$^{1,2,3}$ \\
Oleg Kuzenkov$^{3}$ \\
Simran K. Sandhu$^{1\ast}$}

\maketitle

\noindent{} 1. Department of Mathematics, University of Leicester, LE1 7RH, UK;

\noindent{} 2. Institute of Ecology and Evolution, Russian Academy of Sciences, Moscow, Russia

\noindent{} 3. Lobachevsky State University of Nizhny Novgorod, Nizhny Novgorod, Russia

\noindent{} $\ast$ Corresponding author; e-mail: sks55@le.ac.uk.

\begin{abstract}
Global optimisation problems in high-dimensional and infinite dimensional spaces arise in various real-world applications such as engineering, economics and finance, geophysics, biology, machine learning, optimal control, etc. Among stochastic approaches to global optimisation, biology-inspired methods are currently very popular in the literature. Bio-inspired approaches imitate natural ecological and evolutionary processes and are reported to be efficient in a large number of practical study cases. On the other hand, many of bio-inspired methods can possess some vital drawbacks. For example, due to their semi-empirical nature, convergence to the globally optimal solution cannot always be guaranteed. Another major obstacle is that the existing methods often struggle with the high dimensionality of space (approximating the underlying functional space), showing a slow convergence. It is often difficult to adjust the dimensionality of the space of parameters in the corresponding computer code for a practical realisation of the optimisation method. Here, we present a bio-inspired global stochastic optimisation method, applicable in Hilbert function spaces. The proposed method is an evolutionary algorithm inspired by Darwin's' famous idea of the survival of the fittest and is, therefore, referred to as the `Survival of the Fittest Algorithm' (SoFA). Mathematically, the convergence of SoFA is a consequence of a fundamental property of localisation of probabilistic measure in a Hilbert space, and we rigorously prove the convergence of the introduced algorithm for a generic class of functionals. The approach is simple in terms of practical coding. As an insightful, real-world problem, we apply our method to find the globally optimal trajectory for the daily vertical migration of zooplankton in the ocean and lakes, the phenomenon considered to be the largest synchronised movement of biomass on Earth. We maximise fitness in a function space derived from a von-Foerster stage-structured population model with biologically realistic parameters. We show that for problems of fitness maximisation in high-dimensional spaces, SoFA provides better performance as compared to some other stochastic global optimisation algorithms. We highlight the links between the new optimisation algorithm and natural selection process in ecosystems occurring within a population via gradual exclusion of competitive con-specific strains.
\end{abstract}

\section{Introduction}

Global optimisation problems in high-dimensional finite as well as infinite-dimensional Hilbert spaces, arise in a large number of applications within different research areas such as mechanics, optics, geophysics, economics, finance, machine learning, biology, etc. Stochastic approaches are currently widely implemented in global optimisation. In particular, the tremendous progress in molecular biology, genetics, microbiology, and foraging ecology made within the recent 30-40 years has led to the emergence of a series of new stochastic optimisation techniques known as bio-inspired methods. Such algorithms of optimisation imitate ecological processes  (e.g. functioning of ant or bees colonies, populations of bacteria, swarms of krill, etc.) as well as those in genetics and biochemistry (e.g. replication and mutation of DNA) \cite{deb2001multi, passino2012bacterial,mavrovouniotis2017survey, rai2013bio}. The objective functional in bio-inspired optimisation is biological fitness, which, is understood in a broad sense. Among bio-inspired optimisation methods, evolutionary algorithms - such as genetic algorithms and differential evolution - are considered to be the key ones. These methods try to mimic long-term biological evolution, including the processes of mutation, recombination, selection and reproduction \cite{deb2001multi}. Genetic algorithms encode information about an individual via a binary representation, whereas differential evolution considers a non-binary representation \cite{back1996evolutionary, storn1997differential}.  The other types of bio-inspired algorithms include swarm intelligence, which is based off interacting searching agents sharing information \cite{di2015optimal, mavrovouniotis2017survey}, and bacterial foraging, which imitate the social foraging behaviour of \textit{Escherichia coli} or other microorganisms (\cite{passino2012bacterial}) or even humans (\cite{volchenkov2013exploration}).

Despite the existence of dozens of promising optimisation techniques imitating biological processes, there are still crucial challenges of their usage \cite{mavrovouniotis2017survey}. The central question is about the eventual convergence of a particular method towards the optimal solution for an increasing number of iterations. Indeed, despite an algorithm having a good record of previously successful implementations, it cannot be guaranteed, without rigorous analytical proof, that the considered method would locate the optimal solution for some new class of problems.
The other shortcoming of the existing bio-inspired techniques is that most of them are not well-designed to find the optimal solution in function spaces characterised by an infinitely large number of dimensions (in practice, of course, there is still an upper limit for the maximal dimensionality). For example, swarm intelligence algorithms are best fit to choose the optimal path out of the existing combinations. On the other hand, for problems with complex constraints on the parameters or/and functions, optimisation methods can get stuck in non-feasible domains (e.g. producing non-viable mutants in the course of evolution) which substantially reduces the speed of optimisation. Adjustment of evolutionary algorithms to deal with the unfeasible realisation of parameters can be achieved, in principle \cite{deb2001multi}; however, this would increase the complexity of the methods preventing their extensive applicability.

In this paper, we introduce a novel method of bio-inspired stochastic global optimisation in high-dimensional or even infinite-dimension Hilbert spaces, named Survival of the Fittest Algorithm (SoFA), which can successfully cope with the pre-mentioned difficulties. SoFA is close to the genetic algorithms of search, however unlike previously proposed methods, it takes advantage of the the fundamental principles of population ecology, in particular, the competitive exclusion principle \cite{odum1971fundamentals}. As the algorithm name suggests, that is the quantification of Darwin's' famous idea of the survival of the fittest \cite{darwin2009origin}. Mathematically, the method uses the recently developed approach of modelling biological evolution based on measure dynamics and measure localisation around the point(s) with the maximal fitness \cite{gorban2007selection,Kuzenkov_measure,Kuzenkov_N,kuzenkov2019towards}. A crucial advantage of SoFA is its convergence which holds for an arbitrary positive objective functional (fitness function) in a Hilbert space, and here we present the corresponding rigorous proof. The convergence rate of SoFA is easily adjustable by tuning the parameters describing mutation rates. Finally, the computational algorithm of the method is extremely straightforward in its usage and does not require specific knowledge of the subject, further improving its practical applicability. 


To demonstrate the great potential of the presented method in solving  optimisation problems in complex biological systems (this method was in fact inspired by such systems), we predict the globally optimal trajectory for the diel vertical migration (DVM) of zooplankton in the ocean. Note that although DVM of zooplankton is recognised to be the largest synchronised movement of biomass on the planet \cite{hays2003,kaiser2011}, this phenomenon is still not well-understood and mathematical modelling would play a role to bridge the current gaps in the area. In our model, each developmental stage of zooplankton is mathematically characterised as a continuous function describing the vertical position of an organism in the water column, so the problem requires the use of Hilbert spaces. The objective functional, defined by population fitness, is derived from the well-known von-Foerster stage-structured population model. We use biological relevant parameters describing DVM of a dominant zooplankton herbivorous species in the north-eastern Black Sea. We compare our method with some other stochastic global optimisation techniques including ESCH \cite{da2010,da2010Thesis}, CRS \cite{kaelo2006,price1983,price1977}, and MLSL \cite{kan1987I,kan1987II} which are well-known in the optimisation literature. We show that SoFA exhibits a higher performance (in terms of reduction of the average error with the number of iterations), in the case where the parametric space is highly dimensional. In particular, the proposed method deals well with biological constraints required all functionals (e.g. mortality, reproduction, maturation, etc.) in the model to be positive which is not the case of the considered ESCH, CRS and MLSL methods.

The paper is structured as follows. In Section 2, we introduce the basic algorithm of the method (Section 2.1) and explain how to apply the optimisation procedure to reveal the optimal trajectory of diel vertical migration (DVM) of zooplankton (Section 2.2). In the Results section, we rigorously prove the convergence of the global optimisation method (Section 3.1) and construct the optimal trajectories of DVM of zooplankton based on the stage-structured model (Section 3.2). We also compare the efficiency of the proposed method with the existing global optimisation methods (Section 3.3). The Discussion section addresses the connection between the new optimisation method and biological evolution processes. The Summary section concludes the study.

\section{Methodology}

\subsection{Description of the basic SoFA framework \label{sec:Algorithm}}

Consider a standard Hilbert space $l_2$ of functions $z$ where infinite sequences are square-summable, i.e.

\begin{equation}
l_2=\left\{z : z=(x_1, x_2, \ldots,x_k, \ldots), \left( \sum_{n=1}^{\infty} x_n^2 \right)^{1/2}<\infty\right\}.
\end{equation}

We take an element $c=(c_1, c_2, \ldots c_k, \ldots)$ with positive components and denote $R=\big( \sum_{n=1}^{\infty} c_n^2 \big)^{1/2}<\infty$. Based on the above element we introduce the following infinite dimensional cube (parallelepiped) $\Pi=\{z : |x_n-a_n|\le c_n/2, n=1,2,\ldots \}$. Suppose there is some continuous positive function (functional) $J(z)$ which is defined in $\Pi$. Assuming that $J(z)$ has an unique point of maximum (denoted by $z^*$), it can be located by implementing a novel optimisation method. The method is outlined in the following algorithm (SoFA). The algorithm will use the following auxiliary function determined by $f(r) = -\frac{r^2}{2R^2}$.

The formal description of the algorithm is the following.

\begin{enumerate}
  \item Randomly choose a point $z_1 = (x_{11}, 0, 0, . . .)$ assuming a uniform distribution in the one-dimensional projection $\Pi_1 = [- c_1/2 , c_1/2 ]$ of the cube  $\Pi$. The functional $J$ is evaluated at this point, i.e. $J(z_1)$.
  
\item The point $z_1$ is taken as the current reference point (denoted by $\bar{z}$) to find the  $z_2$. The second iteration point $z_2 = (x_{12}, x_{22}, 0, 0, . . .)$ is randomly chosen in the two-dimensional projection $\Pi_2 = [- c_1/2 , c_1/2 ]\times [- c_2/2 , c_2/2 ]$ of the cube $\Pi$. The probability density distribution of $z_2$ is given by
\begin{equation*}
    \frac{2^{f(\|z-\bar{z}\|)}}{\int_{\Pi_2} \, 2^{f(\|z-\bar{z}\|)} \,d\Pi_2}.
\end{equation*}

The functional is evaluated at the point $J(z_2)$.
\item The new reference point $\bar{z}$ is randomly chosen out of $z_i$ ($i=1,2$) with the probability defined by $J^2(z_i)/(J^2(z_1)+J^2(z_2))$. Using the updated reference point $\bar{z}$, the third iteration point $z_2 = (x_{13}, x_{23}, x_{33}, 0, 0, . . .)$ is obtained, which is random vector generated in the three-dimensional projection $\Pi_3 = [- c_1/2 , c_1/2 ]\times [- c_2/2 , c_2/2 ] \times [- c_3/2 , c_3/2 ]$ of the cube $\Pi$. The probability density distribution of $z_3$ is given by
\begin{equation*}
    \frac{3^{f(\|z-\bar{z}\|)}}{\int_{\Pi_3} \, 3^{f(\|z-\bar{z}\|)} \,d\Pi_3}.
\end{equation*}

The functional is evaluated at the point $J(z_3)$.

\item Assume $k$ steps of the method have already been completed and therefore one has $k$ points $z_i$, $i=1,\ldots ,k$, with known corresponding values of $J(z_1),\ldots ,J(z_k)$. Then randomly select some reference point $\bar{z}$ out of all previously found points $z_i$. The probability of selecting each of these available points is given by $\frac{J^k(z_i)}{J^k(z_1)+\ldots +J^k(z_k)}$, $i=1,\ldots ,k$.

\item Using the above reference point $\bar{z}$, select a new point $z_{k+1}$ which is a random vector generated in the (k+1)-dimensional projection $\Pi_{k+1} = [- c_1/2 , c_1/2 ]\times [- c_2/2 , c_2/2 ] \dotsm \times [- c_{k+1}/2 , c_{k+1}/2 ]$ of the cube $\Pi$. The probability density distribution of $z_{k+1}$ is given by
\begin{equation}
    \frac{(k+1)^{f(\|z-\bar{z}\|)}}{\int_{\Pi_{k+1}} \, (k+1)^{f(\|z-\bar{z}\|)} \,d\Pi_{k+1}}.
     \label{eq:probability}
\end{equation}

The functional is evaluated at the point $J(z_{k+1})$, then the above steps are repeated.

\item The method will terminate once the approximation satisfies some initially prescribed requirement. In the simplest case, this can be a maximum number of iterations, although this simple criterion cannot necessarily guarantee an appropriate approximation of the maximal point. Another criterion can be that the distribution of points around the optimal should approach to the delta function (e.g. the standard deviation of the distribution should be smaller than a certain threshold). 
\end{enumerate}

\begin{remark}
Note that, technically, one can use a simplified version of the optimisation algorithm defined above (see Appendix A for details). In particular, we approximate the multi-dimensional function (\ref{eq:probability}) in the probability density distribution for the point $z_{k+1}$ by a product of simpler one-dimensional functions each of which given by 
 $$\frac{A^j_{k+1}}{\epsilon_{k+1}+(x_{j,k+1}-\bar{x}_j)^2},$$
where $A^j_{k+1}$ is a normalising constant, $\epsilon_{k+1}$ is the parameter characterising the standard deviation of the distribution, $\bar{x}_j$ is the $j^{th}$ coordinate of the current reference point $\bar{z}$. By choosing a particular parameterisation of $\epsilon_{k+1}$, one can regulate the rate of convergence of the optimisation algorithm. Finally, in practice, one can add extra dimensions only after a certain number of iterations (i.e. not at every iteration as in the basic version of SoFA); the dimensions can be added in blocks (i.e. adding several dimensions at some iteration step). After achieving a high dimensionality, one can keep the same number of dimensions until the end of the optimisation procedure, this is because the negative effects of computational noise in a higher dimensional system can undermine the positive effects of adding extra coordinates when approximating the function space with a finite number of dimensions.

\end{remark}

\begin{remark}
The main idea of SoFA is combining the processes of selection and mutation at each step. Unlike  genetic algorithms and methods of differential evolution, SoFA does not mimic the phenomenon of crossover.

In SoFA, evolutionary selection within the population is mimicked when choosing the reference point $\bar{z}$ in each iteration step. We can explain the underlying biological rationale in a non-rigorous way by considering the following simple discrete population model. We suggest that we have $k$ competing subpopulations within the population.  The growth of the subpopulation $i$ ($i=1,..,k$) with density $y_i$ from the start of year $s$ to the end of the same year denoted by $s^+$ is described by
 $$ y_i(s^+)= y_i(s)(1+a_i),$$
where $a_i>0$ is the per capita population growth rate for the given subpopulation. For simplicity, it is assumed to be constant and the generation time equals one year. We consider that by the end of each year the population is harvested proportionally to the current densities of each subpopulation in a way the total population density is kept at some carrying capacity (we assume it to be unity without the loss of generality). In this case, in the beginning of year $s+1$ the population density $y_i$ will be given by 
 $$ y_i(s+1)= \frac{y_i(s^+)}{\sum_j y_j(s^+)}=\frac{y_i(s)(1+a_i)}{\sum_j y_j(s)(1+a_j)}.$$
We start with the initial densities of the subpopulations such that their sum is exactly the carrying capacity of the system. In this case, one can easily derive the density $y_i(k^+)$ at the end of year $k$   $$ y_i(k^+)= \frac{(1+a_i)^k}{\sum_j (1+a_j)^k}.$$
By denoting $J_i=1+a_i$, we can re-write the above ratio as 
  $$y_i(k^+)= \frac{J_i^k}{\sum_j J_j^k}.$$
One can see that with long times (large $k$), the density of the subpopulation $y_i$ having the maximal fitness $\max J_i$ will tend to unity, whereas the proportion of the others will tend to zero. This fact explains the idea of the method where at each iteration, the selected reference point is in the proposed ratio form.

The mutation in the new optimisation method is modelled by an integral term (\ref{eq:probability}) with a Gaussian kernel. The centre of this kernel corresponds to the parent strain with the life-history trait $\bar{z}$, which produces mutant offspring. Note the choice of the parent strain is based on its reproductive success determined by $\frac{J_i^k}{\sum_j J_j^k}$. 




\end{remark}




\subsection{Modelling optimal DVM of zooplankton}
We apply the above optimisation method to model the particularly thought-provoking ecological case study, which is the regular daily vertical migration of zooplankton. Diel vertical migration (DVM) of marine and freshwater zooplankton in the water column, is regarded to be the largest synchronised movement of biomass on Earth \cite{hays2003,kaiser2011}. It is also vital to develop our understanding of patterns of DVM as it heavily impacts the biochemical cycles in the ocean, playing a fundamental role in the carbon exchange between the deep and surface waters, the ocean's biological pump and thus, the climate \cite{ducklow2001,buesseler2007,bianchi2013,hansen2016}. The typical pattern of DVM consists of the zooplankton organisms ascending to the phytoplankton rich surface waters for feeding at night, then descending to deeper depths and remaining there during the day \cite{hays2003,ohman1994}. There are currently several explanations of what is the ultimate cause of this mass migration. The most widely accepted hypothesis is that the zooplankton performs DVM to avoid visual predation (mostly by planktivorous fish) by spending daylight hours in the deeper darker waters and migrating up at night when these visual predators cannot see them \cite{ohman1994,fortier2001,pearre2003,lampert1989}.


The phenomenon of DVM has been studied extensively, both empirically \cite{ohman1994,fortier2001,pearre2003} and theoretically through the use of several mathematical models \cite{clark2000,ringelberg2009,morozov2011}; however, there are still some important gaps in our knowledge of zooplankton DVM.  The major challenge in modelling zooplankton DVM is to properly define the criterion of optimality since the use of different criteria may result in distinct predictions \cite{Fiksen95, han2001control,liu2003diel, morozov2016towards}. Recently, there has been the proposal of a new rigorous approach to identify the evolutionary fitness in systems with inheritance \cite{morozov2016towards,kuzenkov2019towards}. The approach considers the long-term dynamics of competing subpopulations which are described by different inherited units. 
Evolutionary fitness is defined based on the comparative ranking order of the subpopulations characterised by different behavioural strategies or life-history traits. In other words, a subpopulation will outcompete all other subpopulations possessing a lower ranking, with this ranking order being defined by the fitness function.
The proposed idea of modelling biological evolution in \cite{morozov2016towards,kuzenkov2019towards} is similar to that of SoFA framework which makes it natural to apply of the new optimisation method to maximise evolutionary fitness, in particular in DVM of zooplankton.

Using the above theoretical approach, the expression for evolutionary fitness was determined for some age-structured population models \cite{kuzenkov2019towards,morozov2019}. The need for age-structured models is justified by the fact that different zooplankton developmental stages exhibit distinct migration patterns of DVM \cite{Fiksen95,morozov2019}. Using a von-Foerster-type equation \cite{cushing1998,botsford1994} with continuous age but  discrete stages Morozov et al. derived the following expression for evolutionary fitness \cite{morozov2019} 
\begin{equation}
    J(v)=\frac{\max_i{\mathbb{R}(\lambda_i(v))}}{R(v)},
    \label{eq:fitness1}
\end{equation}
where $v$ is a continuous vector function of time describing the daily vertical trajectory for the considered developmental stages, $\lambda_i$ is the eigenvalue of the appropriate characteristic equation (see \cite{kuzenkov2019towards,morozov2019} for details) and $\mathbb{R}$ denotes the real part of this eigenvalue. $R(v)$ is the functional describing intraspecific competition within the population. For simplicity we assume $R(v) \equiv 1$. Note that Morozov and co-authors \cite{morozov2019} explored the the optimal trajectories in of DVM only based on a piecewise linear approximation, moreover, the authors used local rather than global optimisation methods. As such, it would be important to reveal the smooth underlying DVM trajectories which are `the exact' solution of the global optimisation problem (\ref{eq:fitness1}). 

As in \cite{morozov2019}, we consider three different developmental stages of herbivorous zooplankton: young stages, juveniles and adults. In this case, the characteristic equation for the eigenvalue $\lambda$ is given by \cite{morozov2019}:
\begin{equation}
    \lambda=b\exp{\left(-a_Y\tau_1-a_J(\tau_J-\tau_Y)\right)}\left[\exp{\left(-\tau_J \lambda \right)}-\exp{\left(-\tau_A \lambda-a_2(\tau_A-\tau_J)\right)}\right]-a_A,
    \label{eq:fitness}
\end{equation}
where $i=Y,J,A$ correspond to young, juvenile and adult stages, respectively. Here, $a_i$ is the mortality rates which accounts for the natural losses a due to visual predators (varying throughout the day due to dependence on light intensity), additional morality at the boundaries of unfavourable zones and natural non-predatory based mortality. $\tau_i$ are the maturation times of each stage; $\tau_A$ denotes the maximal reproduction age of adults. $b(v)$ is the reproduction coefficient which quantifies the number of eggs produced by a female in any given day. The terms $b(v)$ and $\tau_i$ incorporate the energy gained from feeding on phytoplankton, loses due to basal metabolism and the metabolic cost spent on active movements in the water when feeding and moving upwards while ascending. 

Maximisation of $J$ defined by (\ref{eq:fitness}) will locate the optimal strategies for all three developmental stages $v=(v_Y,v_J,v_A)$. The parameterisation of the integrals for computing $a_i$, $\tau_i$ and $b$ are assumed to be the same as in \cite{morozov2019}, for brevity, we do not include here the corresponding expressions. The model parameters used in the integrals are defined to be the default values from Table 1 of the cited work. The main difference between this study and that of Morozov et al. \cite{morozov2019} is that here the zooplankton is considered to be feeding when at a shallow enough depth and with their vertical speed being less than than the minimum threshold given by $c_0=10m/h$. Whereas Morozov et al. assumed that zooplankton grazers only feed when staying at shallowest depths during DVM and therefore did not allow for feeding during any vertical movement.

To reconstruct the trajectory in the Hilbert function space, we consider the following Fourier expansion
\begin{equation}
    v_s(t)=v_{s,1}+\sum^{N}_{m=1}\left(v_{s,2m}\sin(2\pi tm)+v_{s,2m+1}\cos(2\pi tm)\right)
     \label{eq:fouier}
\end{equation}
for $s=Y,J,A$. The maximal order $n=2N+1$ ($N=0,1,2,...$) in the above Fourier expansion can be set as large as possible. We substitute $v_s(t)$ in the integrals for $a_i$, $\tau_i$ and $b$ in the equation for fitness. We combine the trajectories of the all considered developmental stages in a single vector $v$ of dimension $3n$ setting $v_Y=v_1,....,v_{n}$, $v_J=v_{n+1},...,v_{2n}$ and $v_A=v_{2n+1},...,v_{3n}$ to perform optimisation in the space of dimension $D=3n$.





To evaluate the effectiveness of our method, along with the other long-standing optimisation methods, we introduce the following error definition.
\begin{definition}{\textbf{Function Error}}\label{Def:Function_Error}
Let the true maximal Fourier coefficients to be defined by the vector $v^*$ of dimension $D=3n$, then we define the function error of the approximated trajectory with Fourier coefficients given by $v$ as
\begin{equation}
    Err=J(v^*)-J(v)
    \label{eq:Err} 
\end{equation}
\end{definition}



We can also define the probability of convergence of an optimisation algorithm in the following way.
\begin{definition}{\textbf{Probability of Convergence.}}\label{def:Probability_of_Convergence}
Let the true maximal point be $v^*$ and the approximation of the optimal solution be $v$. Then the probability of convergence of the fitness to the 'true' optimal value $J^*$ with error $\delta>0$ is defined as follows
\begin{equation}
    P_{\delta}=P(|J(v^*)-J(v)|<\delta)
     \label{eq:P_delta} 
\end{equation}
\end{definition}
By fixing the value of $\delta$, we evaluate the performance of optimisation by plotting the ratio of successful approximations (i.e. those below the tolerance $\delta$) for an increasing number of iterations. Note that, usually the exact value of $v^*$ is unknown, so in this study, we use an approximation to the optimal solution by running simulations for a large number of iterations.


Along with SoFA, we also implement three other well-known stochastic optimisation algorithms: ESCH (Evolutionary Strategy with Cauchy distribution) \cite{da2010,da2010Thesis}, CRS (Controlled Random Search with local mutation) \cite{kaelo2006,price1983,price1977}, and MLSL (Multi-Level Single-Linkage) \cite{kan1987I,kan1987II}. The comparison of the efficiency of optimisation techniques is made based on the above-introduced error and the probability of convergence. Note that the maximal eigenvalue in the implicit equation for fitness (\ref{eq:fitness}) is, numerically, found using the Levenberg-Marquardt algorithm. Finally, it is also important to mention that within some given domains of $v$, it is possible to have biologically irrelevant results with, for example, negative mortality or maturation rates. Here, we denote these points $v$ as unfeasible points. We will plot the percentage of such points to effectively compare the efficiency of different global optimisation methods used to reveal the optimal DVM of zooplankton. 

Finally, to compare the optimisation in an infinite-dimensional function space we also construct the optimal trajectory based on piecewise linear approximation consider in the study of Morozov et al. \cite{morozov2019}. The corresponding equations are given by 
\begin{equation}
    \bar{v_s}(t)=  \left\{
\begin{array}{ll}
      H_{i0} & 0\leq t<t_{i0} \\
      c_{i0}(t-t_{i0})+H_{i0} & t_{i0}\leq t<t_{i1} \\
      H_{i1} & t_{i1}\leq t<t_{i2} \\
      c_{i1}(t-t_{i2})+H_{i1} & t_{i2}\leq t<t_{i3} \\
      H_{i0} & t_{i3}\leq t\leq 1
      \label{eq:linear} 
\end{array} 
\right.
\end{equation}
for $s=Y,J,A$. Here, the shallowest and deepest depths are given by $H_{i0}$ and $H_{i1}$, respectively; the speeds of ascending and descending are $c_{i0}$ and $c_{i1}$, respectively; the times of the end of each movement phase is described by $t_{ij}$. In this case, the number of unknown parameters is 3 for each stage which makes the overall number of parameters to be $D=9$. This is because in piecewise linear setting the optimal trajectories are symmetrical with respect to $t=0.5$ (see \cite{morozov2019} for detail)

We choose the domains for each of the Fourier coefficients $v_{s,i}$ in (\ref{eq:fouier}) to be the smallest possible domain such that both the `true' optimal point $v^*_{s,i}$ and the initial starting points (determined by the Fourier expansions of the piecewise linear `true' optimal trajectory).

\section{Results}

\subsection{Proof of convergence of SoFA \label{sec:proof}}

Here we rigorously demonstrate the convergence of the Survival of the Fittest Algorithm (SoFA) in the cube $\Pi$ in Hilbert space introduced in Section \ref{sec:Algorithm}. The proof consists of the two following theorems. 

 \begin{theorem}
The sequence $(z_1, . . . , z_k, . . .)$ is everywhere dense with the probability of unity, i.e. for any $z\in\Pi$ the probability to have a point $z_i$ in its neighbourhood  $O_{\epsilon}(z)$ tends to unity for large $m$.
 \end{theorem}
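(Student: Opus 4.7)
The plan is to establish that for each $z \in \Pi$ and each $\epsilon > 0$, almost surely some $z_k$ enters $O_\epsilon(z)$; the full density statement then follows by intersecting countably many such events (letting $z$ range over a countable dense subset of $\Pi$ and $\epsilon$ over rationals). Because the $z_k$ are not independent -- the reference point $\bar z$ itself is random and drawn from the past -- I will invoke the conditional (Lévy) form of the second Borel--Cantelli lemma: if $A_k \in \mathcal{F}_k = \sigma(z_1,\ldots,z_k)$ and $\sum_k P(A_k \mid \mathcal{F}_{k-1}) = \infty$ almost surely, then $A_k$ occurs infinitely often a.s. So it suffices to bound $P(z_{k+1} \in O_\epsilon(z) \mid \mathcal{F}_k)$ below by a non-summable deterministic sequence.

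The first ingredient is a uniform density bound. Both the candidate point $y$ and the reference $\bar z$ lie in $\Pi_{k+1}$, whose $l_2$-diameter is at most $R$, so $\|y - \bar z\|^2 \le R^2$ and hence $(k+1)^{-\|y-\bar z\|^2/(2R^2)} \ge (k+1)^{-1/2}$. The normalising integral is at most the Lebesgue volume $\prod_{n=1}^{k+1} c_n$ (the integrand is bounded by $1$), so the conditional density satisfies $\rho_{k+1}(y) \ge (k+1)^{-1/2} / \prod_{n=1}^{k+1} c_n$ uniformly in $y \in \Pi_{k+1}$ and in $\bar z$. The second ingredient is a well-chosen target set $B_k \subset O_\epsilon(z) \cap \Pi_{k+1}$ whose Lebesgue volume survives the division by $\prod c_n$. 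Using $z, c \in l_2$, pick $K = K(\epsilon, z)$ with $\sum_{n > K} c_n^2 < \epsilon^2/8$ and $\sum_{n > K} z_n^2 < \epsilon^2/8$, and for $k+1 > K$ let $B_k$ be the ``pencil'' $\{y \in \Pi_{k+1} : |y_n - z_n| \le \epsilon/(2\sqrt{2K}) \text{ for } n \le K\}$ with $y_n$ left free in $[-c_n/2, c_n/2]$ for $K < n \le k+1$. Splitting $\|y - z\|^2$ over $n \le K$, $K < n \le k+1$, and $n > k+1$, each piece is bounded by $\epsilon^2/8$, so $B_k \subset O_\epsilon(z)$. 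Its volume equals $(\epsilon/\sqrt{2K})^K \prod_{n=K+1}^{k+1} c_n$, and multiplication by the density lower bound cancels the tail factor, giving $P(z_{k+1} \in B_k \mid \mathcal{F}_k) \ge C_\epsilon \,(k+1)^{-1/2}$ with $C_\epsilon = (\epsilon/\sqrt{2K})^K / \prod_{n=1}^{K} c_n$ independent of $k$ and $\bar z$. Since $\sum_k (k+1)^{-1/2} = \infty$, conditional Borel--Cantelli yields $z_k \in O_\epsilon(z)$ infinitely often almost surely, and in particular the probability that at least one of $z_1,\ldots,z_m$ lies in $O_\epsilon(z)$ tends to $1$ as $m \to \infty$.

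The main obstacle is exactly this cancellation surviving the dimensional mismatch: a naive $(k+1)$-dimensional ball estimate would contribute a factor like $(\epsilon/R)^{k+1}$, far too small to beat the denominator $\prod c_n$. The pencil construction is the key device -- it spends the whole Euclidean ``budget'' $\epsilon$ only on the first $K$ coordinates (where $c_n$ is comparable to $\epsilon$) and lets every later coordinate use its full width $c_n$, so the product in the denominator telescopes against the box volume and only a finite prefix remains. Boundary points $z$ with $|z_n| = c_n/2$ are handled by proving density at interior points first and then taking closures, which is harmless since the interior is dense in $\Pi$.
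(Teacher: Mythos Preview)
Your argument is correct and essentially the same as the paper's: both restrict only the first $K$ (resp.\ $N$) coordinates---your ``pencil''---so that the tail factor $\prod_{n>K} c_n$ cancels against the normaliser, and both extract the uniform lower bound $(k+1)^{-1/2}$ on the density to obtain $P(z_{k+1}\in O_\epsilon(z)\mid\mathcal F_k)\ge C_\epsilon (k+1)^{-1/2}$, whose divergence forces almost-sure hitting. The only cosmetic differences are that the paper uses side-lengths $\epsilon/2^n$ and finishes with the elementary product bound $\prod_{M}(1-\mu^*(N+M)^{-1/2})\to 0$ in place of conditional Borel--Cantelli; your explicit treatment of the filtration dependence and of the countable reduction over $(z,\epsilon)$ is in fact a mild sharpening of the same proof.
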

 \begin{proof}
 As $f(r) = -\frac{r^2}{2R^2}$, it is easy to see that $f(0)=0$ and for any two elements $h$ and $g$ within the cube $\Pi$ we have $\left \| h-g \right \| < R$, thus $f(\left \| h-g \right \|)>f(R) = -1/2$ (we define the constant $R$ in Section \ref{sec:Algorithm}).
 
We take a positive number $\epsilon<1/2$ and arbitrary element $z=(x_1, \ldots ,x_n, \ldots )\in\Pi$, thus we can introduce the $\epsilon$ neighbourhood $O_{\epsilon}(z)$ of $z$. Since the series $ \sum_{n=1}^{\infty} c_n^2$ is convergent ($c_n$ are introduced in Section \ref{sec:Algorithm}), there should exist a number $N$ such that $\sum\limits_{n=N+1}^{\infty} c_n^2\leq \frac{\epsilon^2}{2}$. 
Consider a point $y=(y_1,\ldots,y_n,\ldots)\in\Pi$. In the case its components satisfy the condition
$|y_n-x_n|\leq\frac{\epsilon}{2^n}, n=1,\ldots,N,$ then the point $y$ should belong to $O_{\epsilon}(z)$. 
Indeed, one can see that $$\sum\limits_{n=1}^{\infty} (x_n-y_n)^2=\sum\limits_{n=1}^{N} (x_n-y_n)^2+\sum\limits_{n=N+1}^{\infty} (x_n-y_n)^2\leq\sum\limits_{n=1}^{N} (\frac{\epsilon}{2^n})^2+ \sum\limits_{n=N+1}^{\infty} (c_n)^2\leq\epsilon^2.$$
Therefore, the set $\omega=\{ y=(y_1,\ldots y_n,\ldots): \left |y_n-x_n\right | \leq\frac{\epsilon}{2^n}, n=1,\ldots,N \}$ is actually a subset of $O_\epsilon (z)$.

Now assume that $k\geq N$ iterations of the method have been completed. The probability $P_{k+1}(\omega)$ for the point $z_{k+1}$ to lend in $\omega$ is given by
$$P_{k+1}(\omega)= \frac{\int\limits_{\omega} (k+1)^{f(\left \| z-\overline{z} \right \|)}d\Pi_{k+1}}{\int\limits_{\Pi_{k+1}}(k+1)^{f(\left \| z-\overline{z} \right \|)}d\Pi_{k+1}}.$$

We estimate the lower bound of the above probability;

$$P_{k+1}(\omega)= \prod \limits_{n=1}^N \frac{\int\limits_{x_n-\frac{\epsilon}{2^n}}^{ x_n+\frac{\epsilon}{2^n}} (k+1)^{f(\left | y_n-\bar{x}_n\right |)} dy_n}{\int\limits_{-c_n/2}^{c_n/2} (k+1)^{f(\left | y_n- \bar{x}_n\right |)}dy_n}\geq(k+1)^{-\frac{1}{2}} \prod\limits_{n=1}^N\frac{2\epsilon}{c_n2^n }=(k+1)^{-\frac{1}{2}}\mu^*,$$
where $\mu^*=\prod\limits_{n=1}^N\frac{2\epsilon}{c_n2^n }$ is a constant which does not depend on the number of iteration; $\bar{x}_n$ are the coordinates of the reference point $\bar{z}$. The probability to not land in the set $\omega$ within $M$ consecutive iterations starting from $N+1$ to $N+M$ can be estimated as
$$P_M \leq (1-\mu^*(M+N)^{-\frac{1}{2} })^M.$$
Since we have
$$ 
\lim\limits_{M\to \infty} (1-\mu^*(M+N)^{-\frac{1}{2} })^M =0,
$$
then the probability of non-choosing a point from $\omega$ for $M$ iterations tends to zero with $M \to \infty$.

Therefore, the sequence of points $z_k$ is everywhere dense in $\Pi$ with the probability of unity. 
 \end{proof}

 \begin{theorem}
 
 Let a continuous positive function $J(z)$ defined in $\Pi$ has an unique point of maximum given by $z^*=(x_1^*,x_2^*,\ldots x_k^*, \ldots)$. Then for any $\epsilon>0$ the probability of choosing a point $z_k$ from the neighbourhood $O_{\epsilon}(z^*)$ tends to unity when $k$ becomes infinitely large. In other words, the optimisation algorithm converges.
  \end{theorem}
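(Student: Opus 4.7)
The plan is to combine Theorem~1 (density of the sample in $\Pi$) with two concentration mechanisms: a \emph{selection} effect that forces the reference point $\bar z$ close to $z^*$, and a \emph{sampling} effect that then forces $z_k$ close to $\bar z$. A useful preliminary is that $\Pi$ is compact in the $l_2$-topology: a diagonal argument extracts a componentwise-convergent subsequence from any sequence in $\Pi$, and the tail estimate $\sum_{n>N} c_n^2 \to 0$ upgrades this to $l_2$-convergence. Consequently the continuous positive functional $J$ attains its maximum uniquely at $z^*$, and for every $\epsilon>0$ one obtains, by continuity and compactness, numbers $\epsilon_1<\epsilon_2<\epsilon$ and $\eta>0$ with $J\ge J(z^*)-\eta/2$ on $O_{\epsilon_1}(z^*)$ and $J\le J(z^*)-\eta$ on $\Pi\setminus O_{\epsilon_2}(z^*)$.

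The selection step goes as follows. By Theorem~1, with probability tending to one at least one of the first $k$ sampled points, say $z_{i_0}$, lies in $O_{\epsilon_1}(z^*)$, so $J(z_{i_0})\ge J(z^*)-\eta/2$. Combining this lower bound on the ``good'' contribution with the trivial upper bound $J(z_i)\le J(z^*)$ for every $i$, the probability that the reference point lies outside $O_{\epsilon_2}(z^*)$ satisfies
\[
P\bigl(\bar z \notin O_{\epsilon_2}(z^*)\bigr) \;\le\; \frac{(k-1)\bigl(J(z^*)-\eta\bigr)^k}{\bigl(J(z^*)-\eta/2\bigr)^k} \;=\; (k-1)\left(\frac{J(z^*)-\eta}{J(z^*)-\eta/2}\right)^k \;\longrightarrow\; 0,
\]
because the geometric factor, strictly less than one, overwhelms the linear prefactor. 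Hence $\bar z$ will concentrate arbitrarily close to $z^*$.

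For the sampling step, I would exploit that the density $k^{-\|z-\bar z\|^2/(2R^2)}$ factorises coordinatewise into truncated Gaussians on $[a_n-c_n/2,a_n+c_n/2]$ with scale $\sigma_k=R/\sqrt{\ln k}$. Each coordinate satisfies $E\bigl[(x_{n,k}-\bar x_n)^2\bigr]\le \min(c_n^2,\,C\sigma_k^2)$ for an absolute constant $C$ (the first bound from confinement to an interval of length $c_n$, the second from the untruncated Gaussian). Summing over $n\le k$ and splitting at a cutoff $N_0$,
\[
E\|z_k-\bar z\|^2 \;\le\; N_0\, C\sigma_k^2 + \sum_{n>N_0} c_n^2,
\]
which can be made arbitrarily small by first picking $N_0$ large (so the summable tail is small) and then $k$ large (so $\sigma_k^2 = R^2/\ln k$ is small). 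Markov's inequality turns this into $\|z_k-\bar z\|\to 0$ in probability, and the triangle inequality $\|z_k-z^*\|\le\|z_k-\bar z\|+\|\bar z-z^*\|$ combined with the selection step yields the desired $P(z_k\in O_\epsilon(z^*))\to 1$.

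The hard part will be the sampling step. Naively, a $k$-dimensional Gaussian with per-coordinate scale $\sigma_k$ has expected squared norm $k\sigma_k^2 = kR^2/\ln k \to \infty$, so concentration would fail without the truncation imposed by $\Pi$. The rescue is that for coordinates with small $c_n$ the truncation dominates the Gaussian scale and cuts each coordinate's contribution down to $O(c_n^2)$, which is summable by hypothesis; balancing this truncation-dominated tail against the Gaussian-dominated head (via the cutoff $N_0$) is the essential quantitative point that makes the whole argument go through.
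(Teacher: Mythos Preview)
Your proposal is correct and follows the same two-step skeleton as the paper---a \emph{selection} argument forcing the reference point $\bar z$ near $z^*$, followed by a \emph{sampling} argument forcing $z_{k+1}$ near $\bar z$---with the selection step essentially identical to the paper's (the paper writes $J_0=\sup_{\Pi\setminus\omega(z^*)}J$ and bounds the bad-reference probability by $(J_0/J(z_p))^k$, which is your ratio $(J(z^*)-\eta)/(J(z^*)-\eta/2)$ without the explicit $\eta$).

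The sampling step is handled differently. The paper works with the finite-coordinate box $\omega(z^*)=\{y:|y_n-x_n^*|\le\epsilon/2^n,\ n\le N\}\subset O_\epsilon(z^*)$ and, after factorising the density, bounds the probability that $z_{k+1}$ lands in $\omega(z^*)$ as a product of $N$ one-dimensional probabilities, each of which is shown to tend to $1$ via the error function $\Phi$; since only the first $N$ coordinates are constrained, the tail coordinates never enter the estimate. You instead control \emph{all} $k+1$ coordinates through the second moment $E\|z_{k+1}-\bar z\|^2$, splitting at a cutoff $N_0$ so that the head contributes $O(N_0/\ln k)$ and the tail contributes $\sum_{n>N_0}c_n^2$, then invoke Markov. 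Your route is slightly more robust (it makes the role of the summability $\sum c_n^2<\infty$ and the truncation completely transparent, and your compactness/$\eta$ preamble is more careful than the paper's implicit use of the same fact), while the paper's route is a touch slicker because confining attention to the first $N$ coordinates sidesteps the need for any moment computation at all. Both rely on exactly the same ingredients: the coordinatewise factorisation of the Gaussian kernel and the square-summability of $(c_n)$.
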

 \begin{proof}

As it was shown in the proof of the previous theorem, the set $\omega(z^*)=\{ y=(y_1,\ldots y_n,\ldots): \left | y_n-x^*_n\right | \leq\frac{\epsilon}{2^n}, n=1,\ldots,N\}$ is actually a subset of the neighbourhood $O_{\varepsilon} (z^*)$. We estimate the probability of landing of the point $z_k$ in $\omega(z^*)$.

We introduce the following definitions: $J_0=\sup\limits_{z\in \Pi \backslash\omega(z^*)}J(z)$;
$I_m$ is the set of indexes of the points $z_i; i= 1,\ldots,k$ which get into $\omega(z^*)$; $\overline{I_k}$ is the set of indexes of the points $z_i, i=1,\ldots,k$ which do not get into $\omega(z^*)$.

From Theorem 1, $z_k$ is an everywhere dense sequence (with the probability of unity) in $\Pi$. Since we assume that $J$ is a continuous function, there will always be a point $z_p\in\omega(z^*)$ such that we have $J(z_p)>J_0$.

 For $k>p$, the probability to choose the point $z_j$ as a reference point $\overline{z}$ with the index $j$ from the set $\overline{I_k}$, can be estimated as follows;

$$\frac{\sum \limits_{j \in \overline{I_k}}J^k(z_j) }{\sum \limits_{j=1}^{k}J^{k}(z_j) }\leq \frac{\sum \limits_{j \in \overline{I_k}}J_{0}^{k} J^{-k}(z_p) }{\sum \limits_{j=1}^{k}J^{k}(z_j) J^{-k}(z_p)}<\left (\frac{J_0}{J(z_p)}\right )^{k}\xrightarrow{k \to \infty} 0$$

Since the above probability tends to zero, then the probability of choosing the point $z_j$ as a reference point $\overline{z}$ with the index $j$ from the set $I_k$ tends to unity. 

For $k>N$ we estimate the probability to choose the point $z_{k+1}$ under the condition that the reference point we obtain $z_j$ with the index $j$ from the set $I_k$, i.e. $\overline{z}\in\omega(z^*)$.

$$P_{k+1}(\omega(z^*))= \prod \limits_{n=1}^N \frac{\int\limits_{x_n-\frac{\epsilon}{2^n}}^{ x_n+\frac{\epsilon}{2^n}} (k+1)^{f(\left | y_n-\overline{x_n}\right |)} dy_n}{\int\limits_{-c_n/2}^{c_n/2} (k+1)^{f(\left | y_n-\overline{x_n}\right |)}dy_n}\geq$$

$$\geq\Phi\Big(\frac{x^*_n+ \frac{\epsilon}{2^n}-\overline{x_n}}{R}\sqrt{\ln ( k+1)}\Big)-\Phi\Big(\frac{x^*_n- \frac{\epsilon}{2^n}-\overline{x_n}}{R}\sqrt{\ln ( k+1)}\Big),$$

where
$$\Phi(t)=\frac{2}{\sqrt{\pi}}\int\limits_0^t e^{-\tau^2}d\tau.$$

Since $x^*_n+ \frac{\epsilon}{2^n}-\overline{x_n}>0$, we have
$$\frac{x^*_n+ \frac{\epsilon}{2^n}-\overline{x_n}}{R}\sqrt{\ln ( k+1)}\xrightarrow{k \to \infty}+\infty, 
\quad \Phi(\frac{x^*_n+ \frac{\epsilon}{2^n}-\overline{x_n}}{R}\sqrt{\ln ( k+1)})\xrightarrow{k \to \infty} 1.$$

Since $x^*_n- \frac{\epsilon}{2^n}-\overline{x_n}<0$, we have
$$\frac{x^*_n- \frac{\epsilon}{2^n}-\overline{x_n}}{R}\sqrt{\ln (k+1)}\xrightarrow{k \to \infty}-\infty, \quad \Phi(\frac{x^*_n- \frac{\epsilon}{2^n}-\overline{x_n}}{R}\sqrt{\ln( k+1)})\xrightarrow{k \to \infty}0.$$
Therefore, 
$$P_{k+1}(\omega(z^*))\geq\Phi\Big(\frac{x^*_n+ \frac{\epsilon}{2^n}-\overline{x_n}}{R}\sqrt{\ln ( k+1)}\Big)-\Phi\Big(\frac{x^*_n- \frac{\epsilon}{2^n}-\overline{x_n}}{R}\sqrt{\ln ( k+1)}\Big)\xrightarrow{k \to \infty}1.$$
The above limit finalises the proof of convergence of the proposed optimisation method.
 \end{proof}

\subsection{Simulating optimal trajectories of DVM of zooplankton \label{sec:DVM}}

We apply SoFA to reveal in optimal trajectories of DVM of zooplankton using the functional $J$ given by (\ref{eq:fitness}). The infinite-dimensional space will be approximated by the $n=2N+1$ first terms in Fourier series (\ref{eq:fouier}). The number of the considered Fourier terms  will determine the accuracy of the approximation of the optimal DVM patterns. One key advantage of SoFA is that one can adjust the speed of convergence by appropriately selecting the rate of decrease in the function $\epsilon(k)$ (we implement the simplified version of SoFA, see the first Remark in Section 2.1). Here we consider the parameterisation given by $\epsilon(k)=k^{-\alpha(k)}$, where $\alpha(k)$ is an increasing function of $k$. For simplicity, we consider the linear dependence $\alpha(k)=a+bk$ (with $a,b>0$); in Section 4 we briefly discuss the role of parameterisation of  $\epsilon(k)$ on the efficiency of the method.
\begin{figure}[H]
    \centering
    \includegraphics[trim=0 0 45 0,clip,width=\textwidth]{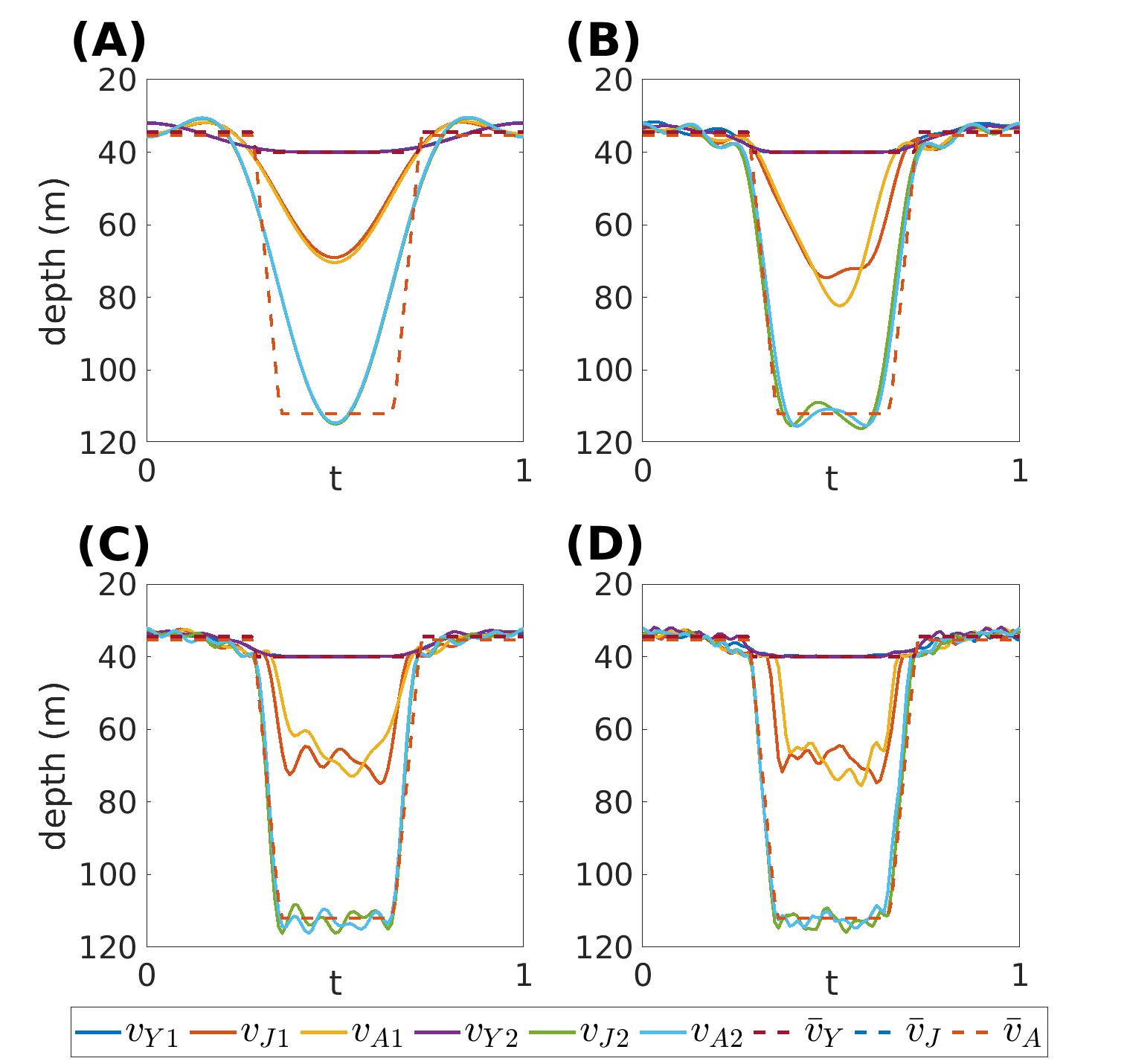}
    \caption{Optimal patterns of DVM of zooplankton obtained using SoFA for varying order of the Fourier series  (\ref{eq:fouier}) shown for each of the three developmental stages. (A) Trajectories are each defined by 5 Fourier terms with the globally maximum strategy being that with the overall shallower depth ($v_1$), unlike the corresponding piecewise linear optimal trajectories ($\bar{v}$). (B-D) Trajectories are each defined by 15, 27 and 55 Fourier terms, respectively, with the globally maximum strategy being at deeper depths ($v_2$), similar to the corresponding piecewise optimal trajectories ($\bar{v}$). Time is re-scaled such that t=0 is equivalent to midnight and t=0.5 is midday. The strategy for the first local maximum ($v_1=(v_{Y1},v_{J1},v_{A1})$) is indicated by the blue, orange and yellow curves (representing youths, juveniles and adults respectively), the strategy for the second local maximum ($v_2=(v_{Y2},v_{J2},v_{A2})$) is indicated by the purple, green and cyan curves. The dashed curves represent the optimal DVM for the case when trajectories are symmetric piecewise linear functions.}
    \label{fig:Low_Order}
\end{figure}

The optimal trajectories of DVM, constructed using SoFA, are presented in Fig.\ref{fig:Low_Order} shown for a progressively increasing number of Fourier terms in (\ref{eq:fouier}). The figure also shows the approximation of the trajectories using a piecewise linear function (\ref{eq:linear}), depicted by the dashed lines. Finally, we show other local maximum possible in the system to demonstrate the need for usage of global optimisation. Technically, to find a local maximum we implemented the local methods (realised in the MATLAB function \textit{fminsearch} based on the Nelder-Mead simplex algorithm) starting from different initial conditions.

 From Fig.\ref{fig:Low_Order} one can conclude that only the older developmental stages of zooplankton (denoted by $J$ and $A$) exhibit pronounced vertical migrations, whereas, the youngest stage ($Y$) remains in the surface waters all day. This pattern is observed for both the piecewise linear and smooth approximation of fitness. Fig.\ref{fig:Low_Order} shows that the two local maxima of the fitness function $J$ give two very different trajectories; the first ($v_1=(v_{Y1},v_{J1},v_{A1})$) with a shallower depth of around 70m, unlike the piecewise linear optimal DVM patterns ($\bar{v}$). The second ($v_2=(v_{Y2},v_{J2},v_{A2})$) goes to much deeper depths of approximately 110m, as was observed with the piecewise linear trajectories ($\bar{v}$). In the case with only $n=5$ Fourier terms used for each of the three age groups, the global fitness is achieved by the shallower patterns of DVM, dissimilar to the optimal piecewise linear DVM patterns. By increasing the order to $n=15$ terms, the globally maximal fitness is attained, by the trajectories similar to the optimal piecewise linear DVM patterns, i.e. those that travel to a deeper depth. Therefore, these two results demonstrate that the addition of more Fourier terms (meaning an increase in order) results in a drastic change in optimal trajectories, causing a switch in global maximum between the two local maxima. Following this, we also investigated very high orders of Fourier expansions such as $n=27$ or even $n=55$ terms to describe each trajectory. Fig.\ref{fig:Low_Order}(C, D) shows that for both these higher Fourier orders there exists two local maxima, furthermore, the global maximum is achieved by the trajectories that migrate to deeper depths.


We investigated the influence of the total number of Fourier terms, per trajectory, $n$ used (i.e. the system dimensionality $D=3n$) on the absolute value of evolutionary fitness $J$, with the results shown in Fig.\ref{fig:Compare_All_Orders}. We found that $J$ initially increases with $n$ up to around 27 terms (with $D=81$) and then remains almost constant. Note that, the small oscillations in $J$ for a large number of Fourier terms, can be partially explained by effects of noise on the system. In the same figure, we also indicate (dashed yellow line) the fitness corresponding to piecewise linear approximation (\ref{eq:linear}). The figure shows that fitness is much lower with a piecewise linear trajectory (yellow dashed line) when compared to that of the global maximum based on smooth curves (solid red curve). An important conclusion can be drawn from Fig.\ref{fig:Compare_All_Orders}, that using too many terms in the approximation of fitness seems to be computationally inefficient due to effects of noise as well as a long computational time needed to operate a high-dimensional system. For this reason, we consider Fourier approximations containing either 15 or 27 terms for the comparison with the other optimisation methods.



\begin{figure}[H]
    \centering
    \includegraphics[trim=25 0 200 0,clip,width=\textwidth]{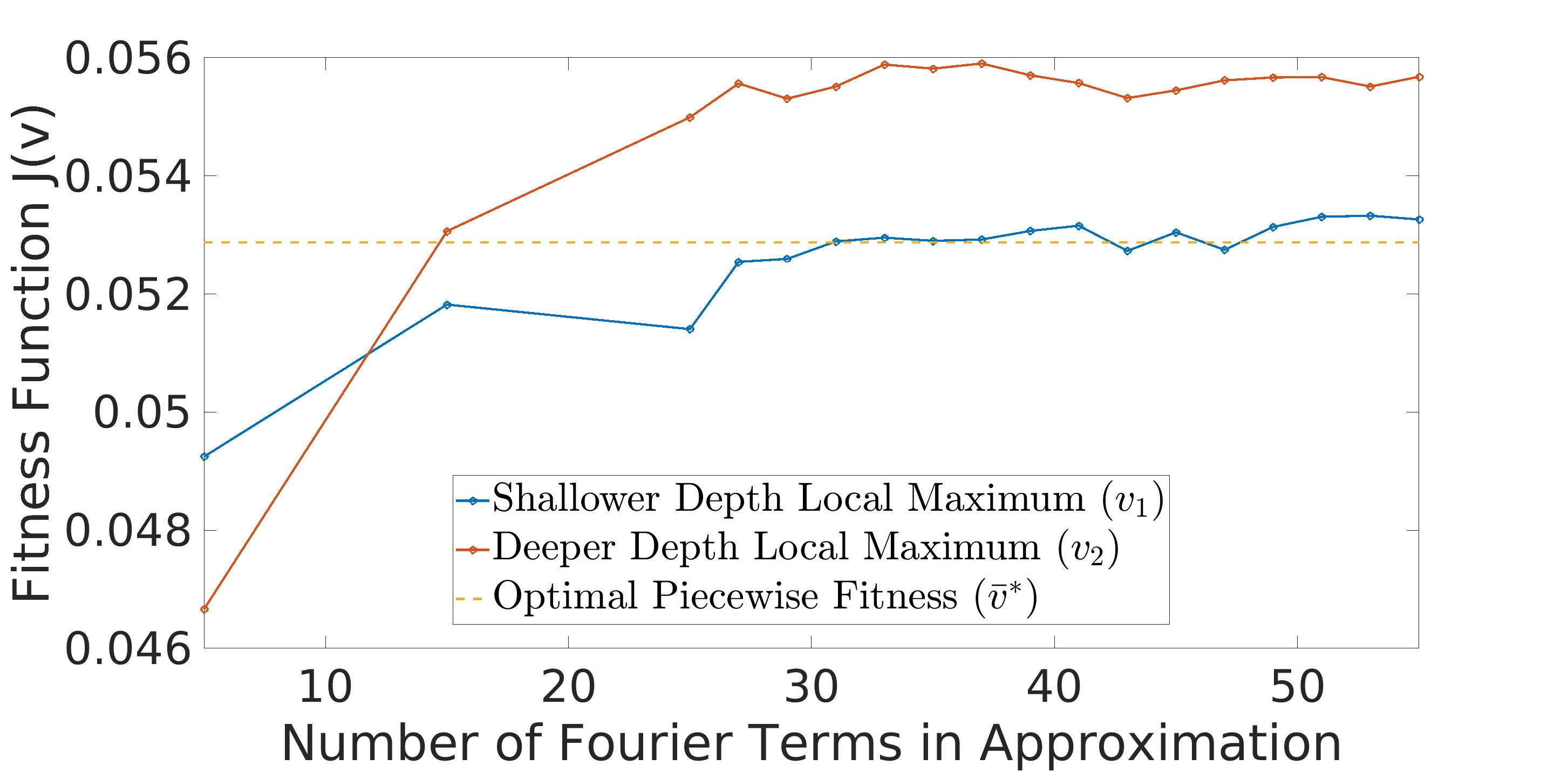}
    \caption{Dependence of fitness $J$ of different strategies of DVM of zooplankton on the number of Fourier terms used in (\ref{eq:fouier}). Fitness of the optimal piecewise linear trajectories $J(\bar{v})$, is indicated by the dashed yellow curve. The two local maxima, $J(v_1)$ and $J(v_2)$, shown by the blue and red curves, respectively, with the global maximum $J(v^*)$ given by the maximum of these two curves.}
    \label{fig:Compare_All_Orders}
\end{figure}

It is important to compare the efficiency of SoFA with some existing bio-inspired global algorithms such as ESCH, CRS and MLSL. For each of these methods, we run 200 realisations, with each of these realisations run for $2\times 10^{5}$ iterations. Fig.\ref{fig:Compare_15} presents the comparison of the efficiency of the fore-mentioned global optimisation algorithms for the Fourier series approximating the DVM trajectory. As a comparison metric, we use the error function $Err$ (shown in upper left panel of Fig.\ref{fig:Compare_15}) and the probability of convergence $P_{\delta}$ for different values of the accuracy $\delta$ (shown in the bottom panels of Fig.\ref{fig:Compare_15}). To understand possible slowness of a method, we also plot the percentage iterations which produce non-viable biological quantities (e.g. negative values of reproduction, maturation time, etc.). The percentage of unfeasible trajectories, displayed in the upper right panel of Fig.\ref{fig:Compare_15}, the greater this percentage, the larger the number of `wasted' iterations that give an approximation of $v$ that are biologically irrelevant. 

For the method introduced in this paper, we found the percentage of unfeasible trajectories is zero. From Fig.\ref{fig:Compare_15}, one can see that SoFA shows the best performance both in terms of the functional error and the probability of convergence. The panels (A) and (B) only differ in the overall number of the Fourier terms used. The figure shows that the novel SoFA optimisation is effective at converging towards the maximum as demonstrated through the steady decrease in the function error. Implementing SoFA results in a rapid convergence towards the global maximum, reaching an error less than $2\times10^{-4}$ in $100\%$ of the 200 repetitions of the algorithm when $n=15$ (with about $80\%$ when $n=27$). Thus, for the considered problem of optimal DVM, SoFA with $\epsilon(k)=k^{-\alpha(k)}$ provides a far superior approximation than the ESCH, CRS and MLSL methods (orange, yellow and purple curves respectively) in terms of the speed and accuracy of its approximation. 

\begin{figure}[H]
    \centering
    \includegraphics[trim=0 0 0 0,clip,width=\textwidth]{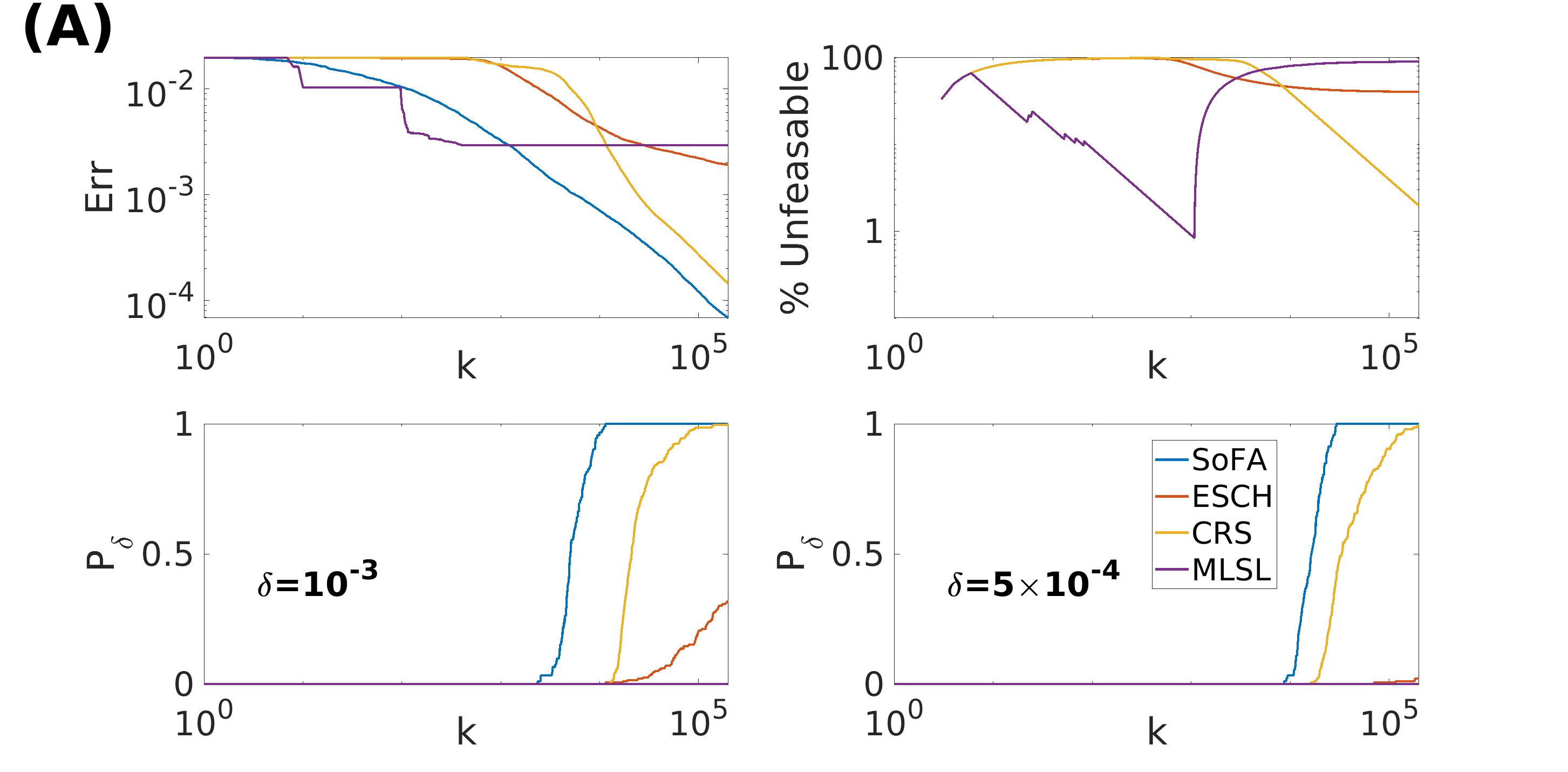}
    \includegraphics[trim=0 0 0 0,clip,width=\textwidth]{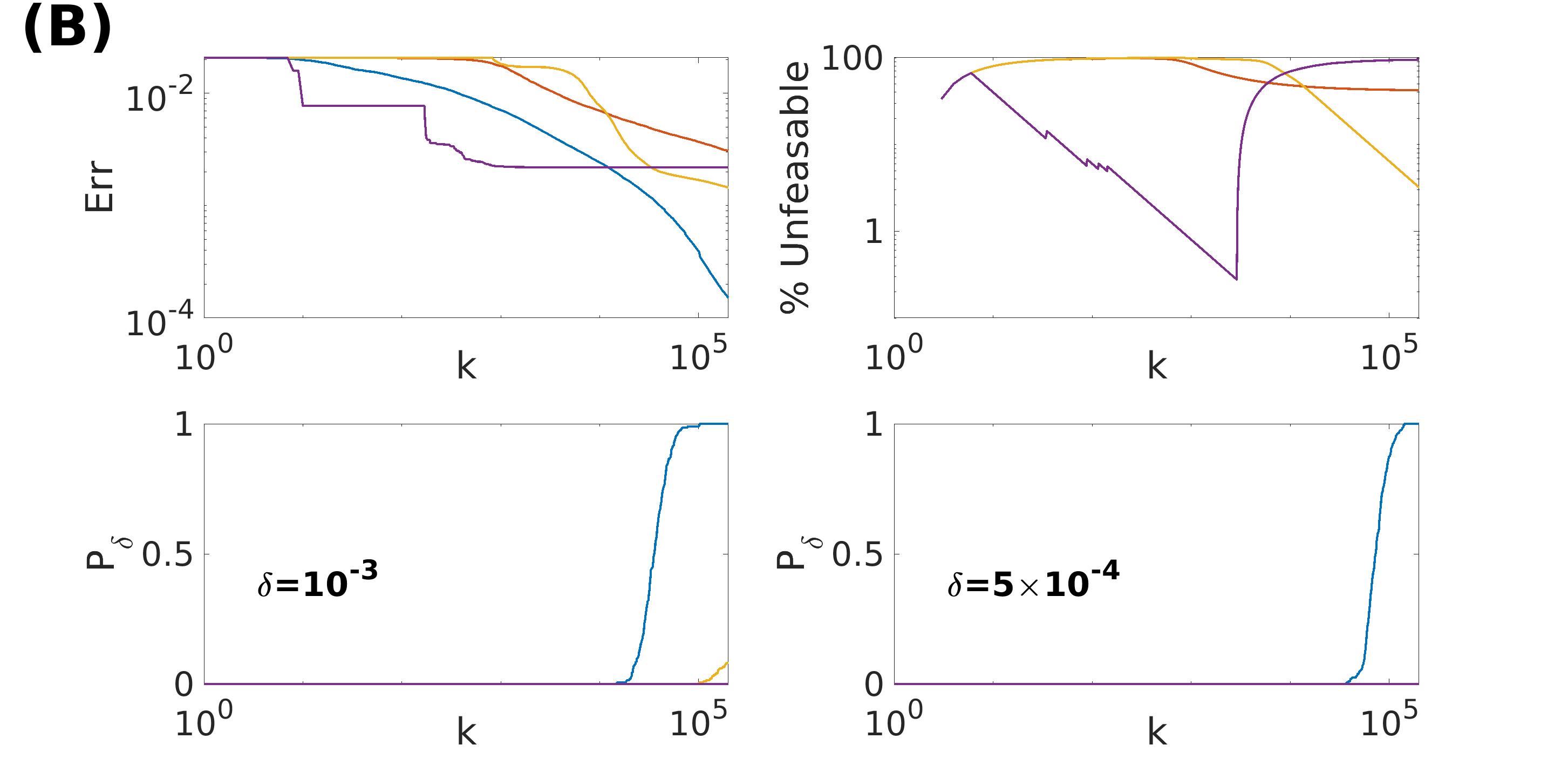}
    \caption{Implementation of multiple global optimisation algorithms to maximise fitness $J(v)$, with each trajectory $v_s$ being of the form (\ref{eq:fouier}).  (A) represent the results for an optimisation problem with a dimension of $D=45$ (i.e. $n=15$ Fourier terms per trajectory), and (B) are that but with dimension $D=81$ (i.e. $n=27$ Fourier terms per trajectory). For each block (A) and (B)
    the upper left panel shows the function error of the approximation for each iteration defined by (\ref{eq:Err}). The upper right panel shows the probability of each iteration giving an unfeasible approximation to the global maximum. The bottom panels show the probabilities of convergence to $\delta=10^{-3}$ and $\delta=5$x$10^{-4}$, as defined by (\ref{eq:P_delta}). The algorithms implemented are: SoFA with $\epsilon(k)=k^{-(a+bk)}$ with $a=0.7$ and $b=2.5$x$10^{-6}$ (blue curve) along with ESCH, CRS and MLSL (orange, yellow and purple curves respectively). The presented results are the average of 200 repetitions of each algorithm.}
    \label{fig:Compare_15}
\end{figure}
Interestingly, SoFA always provides a feasible iteration, where both the ESCH and CRS methods take some time to begin to produce some feasible points and consequently a valid approximation to the maximum. We tested the influence of increasing complexity of the system on the performance of the new optimisation method further by increasing the number of the Fourier expansions from $n=15$ to $n=27$ (see Fig.\ref{fig:Compare_15} (B)). The drastic increase in the dimensionality of the optimisation problem, $D=45$ to $D=81$, results in an overall reduction of accuracy for the fixed number of iterations (i.e. one needs more iterations to achieve the same accuracy), however, SoFA shows better performance as compared to the other optimisation methods used in this study. We found that this trend is observed for a further increase of dimensionality of the optimisation space (we do not show the corresponding figures for brevity). 

\section{Discussion}

Currently, methods of global optimisation are used intensively in various areas of research in applied mathematics, physics, economics, finance and biology. Despite the abundance of the promising techniques, novel optimisation frameworks are still being developed, in particular concerning bio-inspired algorithms. The urgent need for new methods is necessary as existing methods possess some crucial drawbacks and new practical applications require more specific problem-oriented algorithms, for example, to optimise complex biological systems in infinite-dimensional spaces. The need for developing optimisation methods efficiently working in function spaces is justified by the fact more traditional semi-analytical methods  deriving Euler-Lagrange equations, Bellman equations or Pontryagin's principle of maximum with their further numerical solution are not efficient in the case where the objective functional is non-linear, or given by a transcendental equation as in (\ref{eq:fitness}).

In this study, we introduce a bio-inspired global optimisation method named SoFA, which uses the famous idea of the survival of the fittest in biological evolution uncovered by Charles Darwin \cite{darwin2009origin}. Note that, the underlying concept of SoFA was inspired by some earlier works \cite{Grishagin}.  An important advantage of the proposed method is that one can rigorously guarantee its convergence for an extensive class of objective functions. Note that, for some bio-inspired techniques the existence of convergence has been an issue (in terms of formal proof and some simply not guaranteeing convergence with a probability of unity). Another significant advantage of SoFA is that this framework can efficiently work in higher dimensional spaces and allows a gradual increase in dimensionality to deal with infinite-dimension spaces. Our tests based on a complex biological system (\ref{eq:fitness}) showed high efficiency of SoFA compared to several other bio-inspired optimisation methods (see Fig.\ref{fig:Compare_15}). Finally, the algorithm itself is simple in terms of practical coding. Moreover, the given algorithm allows us to use parallel programming, for example one can implement a modification of SoFA by introducing new dimensions by entire blocks and fulfil optimisation for entire blocks.   

As it follows from its nature, the algorithm can be applied to uncover the optimal fitness in complex biological systems with multiple maxima, as the considered example of plankton DVM. However, the method can be naturally implemented to other non-biological systems, for example, in problems of optimal control of heat transfer \cite{Grishagin} or deformations in a metal rod \cite{irkhina2005identification}. Mathematically, the convergence of our method is guaranteed by the property of the localisation of the probability measure in a Hilbert space, which, was reported in previous studies of modelling biological evolution \cite{gorban2007selection,Kuzenkov_measure,Kuzenkov_N,kuzenkov2019towards}. In this case, the measure of available strategies (life-history traits) within the population should be the indicator of the presence of a set of strategies in this population. This measure (in the simplest case it is the amount organisms using a particular strategy) will evolve with time, i.e. with an increase of the number of iterations. As a result, after a long time, only strategies with the fitness close to the optimal one would survive in the system. Eventually, the distribution (e.g. the density function) of strategies would tend to a delta function with a centre corresponding to the maximum of fitness. We call this phenomenon the localisation of the measure.

The phenomenon of the localisation of measure in SoFA can be seen better in Fig. \ref{fig:Distributions}, which shows the evolution of the distribution of the probability density function for the point $z_{k+1}$ after $k$ first iterations shown in the figure label. For simplicity, we show the evolution of the constant term $v_{Y,1}$ in the Fourier expansion for the stage Y.  The resulting distributions in Fig. \ref{fig:Distributions} are obtained using the total probability law theorem: the conditional probability to find $z_{k+1}$  for a particular choice of the reference point $\bar{z}$ is averaged across all possible reference points with weights given by $\frac{J^k(z_i)}{J^k(z_1)+\ldots +J^k(z_k)}$, $i=1,\ldots ,k$. One can see from the figure that the distribution of the population of strategies is drifting on a logarithmic time scale towards a certain final value of parameter value and the shape of the distribution is approaching a delta function. As a result, the measure (the integral over the density) is eventually being localised at the point corresponding to the maximum of fitness. Note that the closeness of the distribution to the delta function can be a working criterion of terminating interactions in the considered algorithm. Interestingly, plotting the distributions of iteration points as those in  Fig. \ref{fig:Distributions} can be useful for assessing the standard deviation of life-history traits within a population as a function of the number of population generations. This will of practical biological (i.e. not only mathematical) interest in the case a biologist needs to estimate the scatting of the considered life-history trait of evolving organisms from the expected eventual optimal value.


\begin{figure}[H]
    \centering
    \includegraphics[trim=140 0 200 50,clip,width=\textwidth]{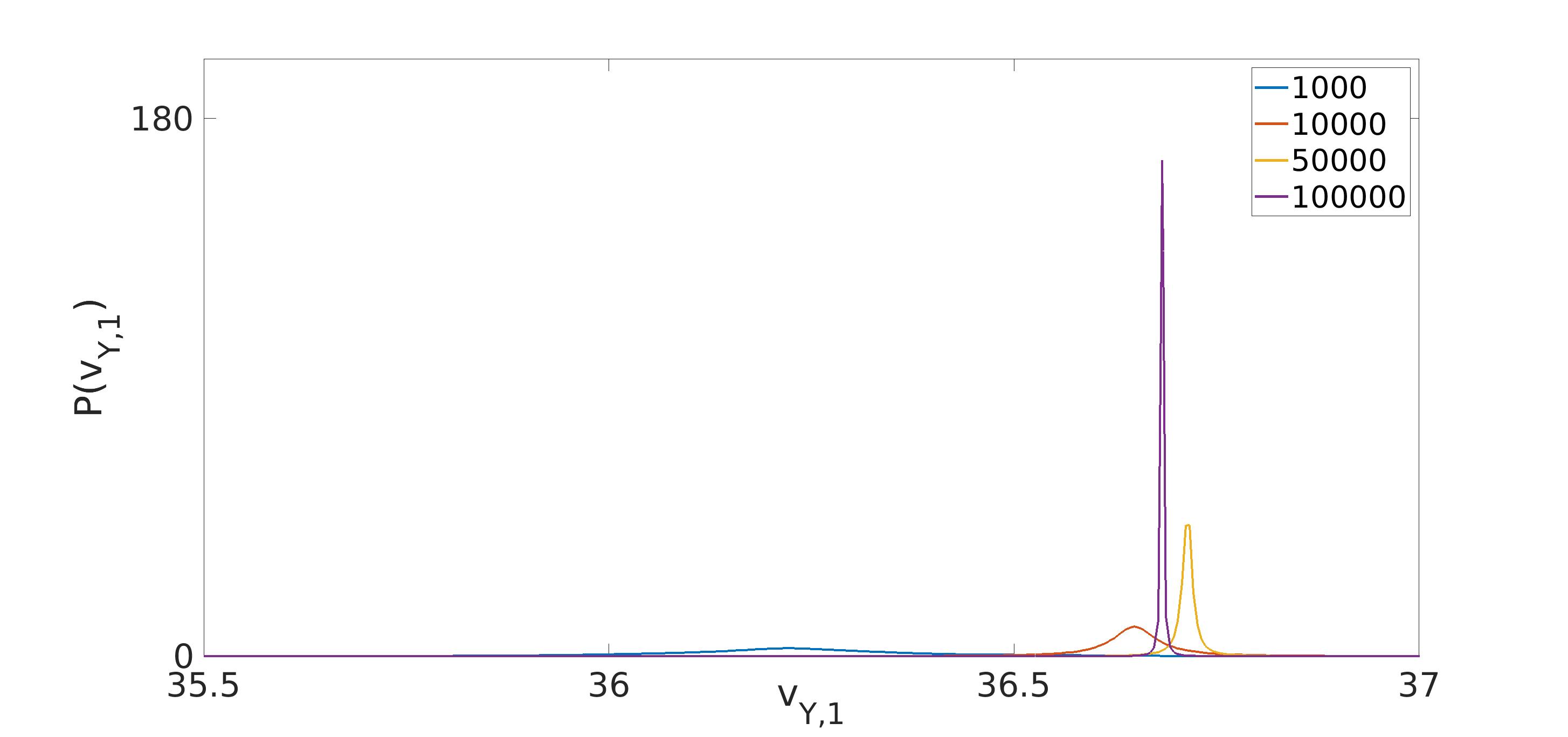}
    \caption{Probability density distribution of the parameter $v_{Y,1}$ (the average daily depth in the DVM trajectory of early developmental stage Y). Here the new optimisation method is implemented with $\epsilon(k)=k^{-(a+bk)}$ with $a=0.7$ and $b=5$x$10^{-6}$ to the maximisation of the fitness $J(v)$, with each trajectory $v_s$ being of the form (\ref{eq:fouier})  each with $n=15$ terms.}
    \label{fig:Distributions}
\end{figure}

Interestingly, our implementation of the SoFA framework to reveal the DVM of herbivorous zooplankton provides some important ecological insights. Note that the model parameters describing the growth, morality, maturation, energy losses, etc. are biologically meaningful and correspond to those in \cite{morozov2019}. For these realistic parameters, we reveal the existence of another (suboptimal) scenario of DVM where fitness has its local maximum. This scenario is characterised by zooplankton staying in shallower depths during the day time  (see Fig.\ref{fig:Low_Order}). Biologically, the shallow depths scenario signifies that zooplankton grazers reach the minimal depth, where the visual predation by fish becomes negligible. Whereas, empirical observations confirm that, in reality, the other scenario where zooplankton stay during the day time at much deeper depths is realised \cite{morozov2019}. The deep waters DVM scenario corresponds to the global maximum of population fitness $J$ shown in Fig.\ref{fig:Low_Order}. The relevant biological conclusion is that the optimal migration depth in the considered ecosystem (the north-eastern Black Sea), is determined by metabolic costs (which are negligible in deep waters) rather than the predation threat which becomes negligible already at depths of 40-50m. Another interesting conclusion from the implementation of SoFA is that the smooth nonlinear optimal trajectories of DVM modelled in the entire function space are close to those given by linear functions at the ascending, descending and deep waters phase of DVM. On the contrary, near the water surface (during the night time), a constant adjustment of depth by a zooplankter results in a higher value of fitness, compared to the scenario where the organism remains at a constant depth.

Note that as a stochastic optimisation method, SoFA is close to the evolutionary algorithms, such as differential evolution \cite{storn1997differential}.  However, an important difference of SoFA with the existing evolutionary algorithms is that it uses the information about the trial points obtained from the all previous iterations (i.e. using the entire evolution history) when choosing the reference point at the current iteration step. As such, the selection of strategies in SoFA occurs on a slower time scale. Also, unlike genetic algorithms and differential evolution, which are semi-empirical, SoFA assures the convergence for an arbitrary positive continuous functional $J$.

The proposed method is an extension of a Monte-Carlo classical approach, which, uses a non-uniform and adjustable density of distribution of trial points.

Following from our numerical experiments, the convergence rate of SoFA largely depends on how quickly the value of the standard deviation in the distribution of $z_k$ decreases with the number of iterations. A fast drop in the mentioned standard deviation results in an improvement of the convergence rate; however, this also increases the risk of being stuck for a long time near a suboptimal local maximum. In the case the objective function does not contain abrupt spikes, the risk of being trapped by a non-global maximum is low. For a sufficiently smooth function $J$, one can substantially accelerate the convergence rate via increasing the rate of decay of the standard deviation of the distribution of $z_k$. Therefore, the method has a potential of improving the convergence rate via an \textit{a priori} knowledge about the behaviour of the objective function. The required \textit{a priori} knowledge of $J$ can be obtained as a part of the implementation of the algorithm, for example by estimating the upper bound of the Lipschitz constant for different parts of the parameter space. Using the evaluation of $J$ at each trial point, one can estimate the local Lipschitz constant. Using the obtained estimates of local Lipschitz constants, one can group the trial points $z_k$ by splitting the whole parameter space into subdomains with different patterns of behaviour of the functions: `flat valleys' and `sharp peaks'. For different subdomains, one can consider distinct rates of increase of the standard deviations, which will allow for the enhancing of the convergence properties without risk to get tapped by a non-global maximum.

The intrinsic connection between SoFA and biological evolution can be uncovered when considering the growth of the dimensionality of the search space, which biologically signifies a gradual increase in the complexity of competing species, macroevolution. The method generates mutations not only by modifying the values of the existing model parameters but also producing new types of mutants with characteristics absent in ancestors via adding new dimensions. This is related to the increase of the number of inherited life-history traits and the overall length of the underlying genetic code. Since the increase in dimensionality in the method is not bounded, the evolutionary processes modelled by SoFA admit an unlimited enhancement of quality species and an unrestricted increase in the complexity of their organisation.

Finally, the presented optimisation method would contribute to the resolving of the curse of dimensionality problem in optimisation. For a hyperball in $n$ dimensions, its volume is concentrated near its boundary. Signifying that in the case where we generate trial points based on a uniform distribution inside some $n$-dimensional hyperball, most of the points will lend within a very thin boundary layer of this hyperball, whereas, only a small proportion of them will target the central part \cite{gorban2018blessing}. However, from the classical theory of calculus of variations, it is well-known that the solution to the optimisation problem is usually an internal point of the functional space. Therefore, a numerical solution of the optimisation problem in high dimensional spaces approximating the underlying Hilbert space requires approaching to some internal point of the considered multi-dimensional hyperball. All stochastic methods based on the uniform distribution of trial points become largely inefficient in this situation, whereas SoFA would be a better candidate to cope with the challenge. Indeed, with a further increase of dimensionality of the search space, the distribution of generated points in SoFA experiences a permanent evolution of the shape. This guarantees the localisation of the trial points in internal parts (i.e. located far from the boundary) of the multi-dimensional hyperballs (see Fig. \ref{fig:Distributions} as an illustrative example). 



\section{Summary}

In this paper, we present a bio-inspired method of global optimisation which quantifies Darwin's' famous idea of the survival of the fittest (the Survival of the Fittest Algorithm, SoFA). The method has multiple advantages as compared to other bio-inspired stochastic optimisation algorithms. In particular, its convergence is guaranteed for any positive continuous objective function(al) and one can apply the method can cope with increasing dimensionality of space: it can find the optimal solution in an infinite-dimensional functional space. Based on an insightful motivating example, maximisation of the fitness functional in a stage-dependent population model, we demonstrate the better performance of SoFA, as compared with some other stochastic algorithms of global optimisation in the case when the dimensionality of the parameter space is high. 

\section*{Appendix A. Simplified algorithm of the optimisation method.}
Here, we provided a simplified version of the SoFA framework presented in Section \ref{sec:Algorithm}.

Suppose there is some continuous positive function $J(z)$ which is defined on the rectangular domain $P=\{z=[z^1,\ldots,z^n]:a^i<z^i<b^i,i=1:n\}$. Assuming that $J(z)$ has a unique point of maximum (denoted by $z^*$), such a point can be located by implementing a simplification of the novel optimisation method described in section 1. This simplified method is outlined in the following algorithm.
\begin{enumerate}
  \item Assume $k$ steps of the method have already been completed and therefore one has $k$ points in $P=\{z_1,..,z_k\}$, with known corresponding values of $J(z_1),\ldots,J(z_k)$.
  \item Randomly select some reference point $\bar{z}$ out of all the available points in $P$. The probability of selecting each of these available points is given by $\frac{J^k(z_i)}{J^k(z_1)+\ldots+J^k(z_k)}$
  \item Using this reference point, select a new point $z_{k+1}$ by setting each coordinate of this point to be a random variable in the interval $[a^j,b^j]$ with the probability density function given by $\frac{A^j_{k+1}}{\epsilon_{k+1}+((z_{k+1}^j)-\bar{z}^j)^2}$ where the constants $A^j_{K+1}$ are chosen to normalise the probability over the interval $[a^j,b^j]$. Here $\bar{z}^j$ is $j^{th}$ coordinate the reference point $\bar{z}$. Note that we use the same notation for the reference point $\bar{z}$ for the sake of simplicity: at each step the reference point might be different.
  \item The function is evaluated at $z_{k+1}$ giving the value of $J(z_{k+1})$, then the steps can be repeated.
  \item The method will terminate once the approximation satisfies some initially prescribed requirement (for example, the distribution of $z_{k+1}$ is sufficiently close to a delta function, see Fig.4).
\end{enumerate}

\bibliography{references}

\end{document}